\newtheorem{thm}{Theorem}%
\theoremstyle{definition}
\theoremstyle{remark}
\theoremstyle{plain}
\def\EE{{\mathbb E}}
\def\HH{{\mathbb H}}
\def\NN{{\mathbb N}}
\def\QQ{{\mathbb Q}}
\def\PP{{\mathbb P}}
\def\RR{{\mathbb R}}
\def\TT{{\mathbb T}}
\def\ZZ{{\mathbb Z}}
\def\vecm{{\text{\boldmath$m$}}}
\def\vecq{{\text{\boldmath$q$}}}
\def\vecx{{\text{\boldmath$x$}}}
\def\vecy{{\text{\boldmath$y$}}}
\def\vecxi{{\text{\boldmath$\xi$}}}
\def\vecnull{{\text{\boldmath$0$}}}
\def\scrA{{\mathcal A}}
\def\scrL{{\mathcal L}}
\def\scrP{{\mathcal P}}
\def\e{\mathrm{e}}
\def\C{\operatorname{C{}}}
\def\L{\operatorname{L{}}}
\def\SL{\operatorname{SL}}
\def\ASL{\operatorname{ASL}}
\def\vol{\operatorname{vol}}
\def\GamG{\Gamma\backslash G}
\def\ASLZ{\ASL(2,\ZZ)}
\def\ASLR{\ASL(2,\RR)}
\def\SLZ{\SL(2,\ZZ)}
\def\SLR{\SL(2,\RR)}
\title[Square roots and lattices]{Square roots and lattices}
\author{Jens Marklof}
\address{Jens Marklof, School of Mathematics, University of Bristol, Bristol BS8 1UG, U.K.\newline \rule[0ex]{0ex}{0ex} \hspace{8pt}{\tt j.marklof@bristol.ac.uk}}
\date{13 June 2024/10 December 2024}
\thanks{Research supported by EPSRC grant EP/W007010/1. Data supporting this study are included within the article. MSC2020: 11K06,37D40,60G55}
\begin{document}

\begin{abstract}
We construct a point set in the Euclidean plane that elucidates the relationship between the fine-scale statistics of the fractional parts of $\sqrt n$ and directional statistics for a shifted lattice. We show that the randomly rotated, and then stretched, point set converges in distribution to a lattice-like random point process. This follows closely the arguments in Elkies and McMullen's original analysis for the gap statistics of $\sqrt{n}\bmod 1$ in terms of random affine lattices [Duke Math.\ J.\ 123  (2004), 95--139]. There is, however, a curious subtlety: the limit process emerging in our construction is {\em not} invariant under the standard $\SLR$-action on $\RR^2$. 
\end{abstract}

\maketitle

\section{Introduction}

{\em Square roots:} In their landmark paper \cite{Elkies04} published twenty years ago, Elkies and McMullen proved that the gap distribution for the fractional parts of $\sqrt n$ ($n=1,\ldots,N$, $N\to\infty$) converges to the previously unknown limiting distribution plotted as the continuous curve in Figure \ref{figStats} (left). To state their result more precisely, let us denote by $0\leq \xi_1\leq \xi_2\leq\ldots\leq \xi_N<1$ the fractional parts of $\sqrt n$ ($n=1,\ldots,N$) ordered by size, and furthermore let $s_n = \xi_{n+1}-\xi_n$ be the gap between $\xi_n$ and $\xi_{n+1}$ ($n=1,\ldots,N-1$), and $s_N=1+\xi_1-\xi_N$ the gap between $\xi_N$ and $1+\xi_1$ (think of the unit interval $[0,1)$ as the real line mod 1 with the end points $0$ and $1$ identified). It is important to note that $\xi_n$ and $s_n$ will change as we move to a different $N$. The Elkies-McMullen theorem then states that, for every $s>0$,
\begin{equation}\label{EM007}
\lim_{N\to\infty} \frac{\#\{ n\leq N \mid N s_n > s \}}{N} = \int_s^\infty P(s')\, ds' ,
\end{equation}
where the gap density $P(s)$ is a piecewise analytic function with a power-law tail, cf.~Figure~\ref{figStats} (left); for an explicit formula see \cite[Theorem 3.14]{Elkies04}.
The scaling of $s_n$ by $N$ is necessary in view of the average gap size $1/N$. It is remarkable that if we change the square root to any other fractional power $n^\beta$ with $0<\beta<1$, the gap distribution of the fractional parts appears to have instead an exponential limit density $P(s)=\e^{-s}$ -- the same as for a Poisson point process! This observation is purely conjectural, and mainly based on numerical experiments. The best rigorous results in this direction are currently due to Lutsko, Sourmelidis and Technau \cite{Lutsko24}, who showed that the two-point correlation function (a slightly weaker fine-scale statistics) of $n^\beta$ converges to that of a Poisson point process, when $0<\beta\leq\frac13$. We refer the reader to \cite{Lutsko24,Technau23} for further background and references on the pseudo-random properties of related arithmetic sequences.

\begin{figure}[h]
\begin{center}
\begin{minipage}{0.49\textwidth}
\unitlength0.1\textwidth
\begin{picture}(10,8)(0,0)
\put(-0.2,-1.5){\includegraphics[width=\textwidth]{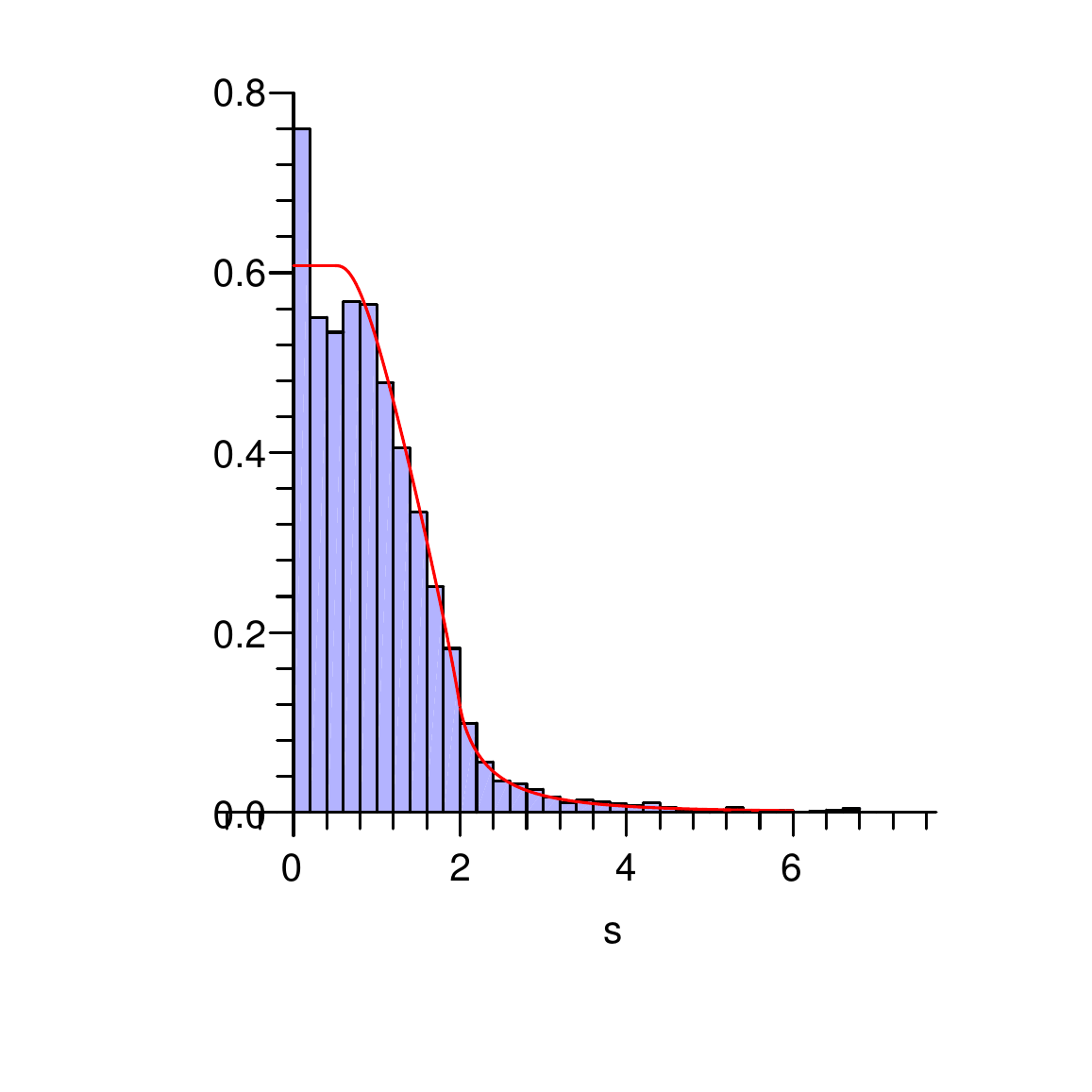}}
\end{picture}
\end{minipage}
\begin{minipage}{0.49\textwidth}
\unitlength0.1\textwidth
\begin{picture}(10,8)(0,0)
\put(-0.2,-1.5){\includegraphics[width=\textwidth]{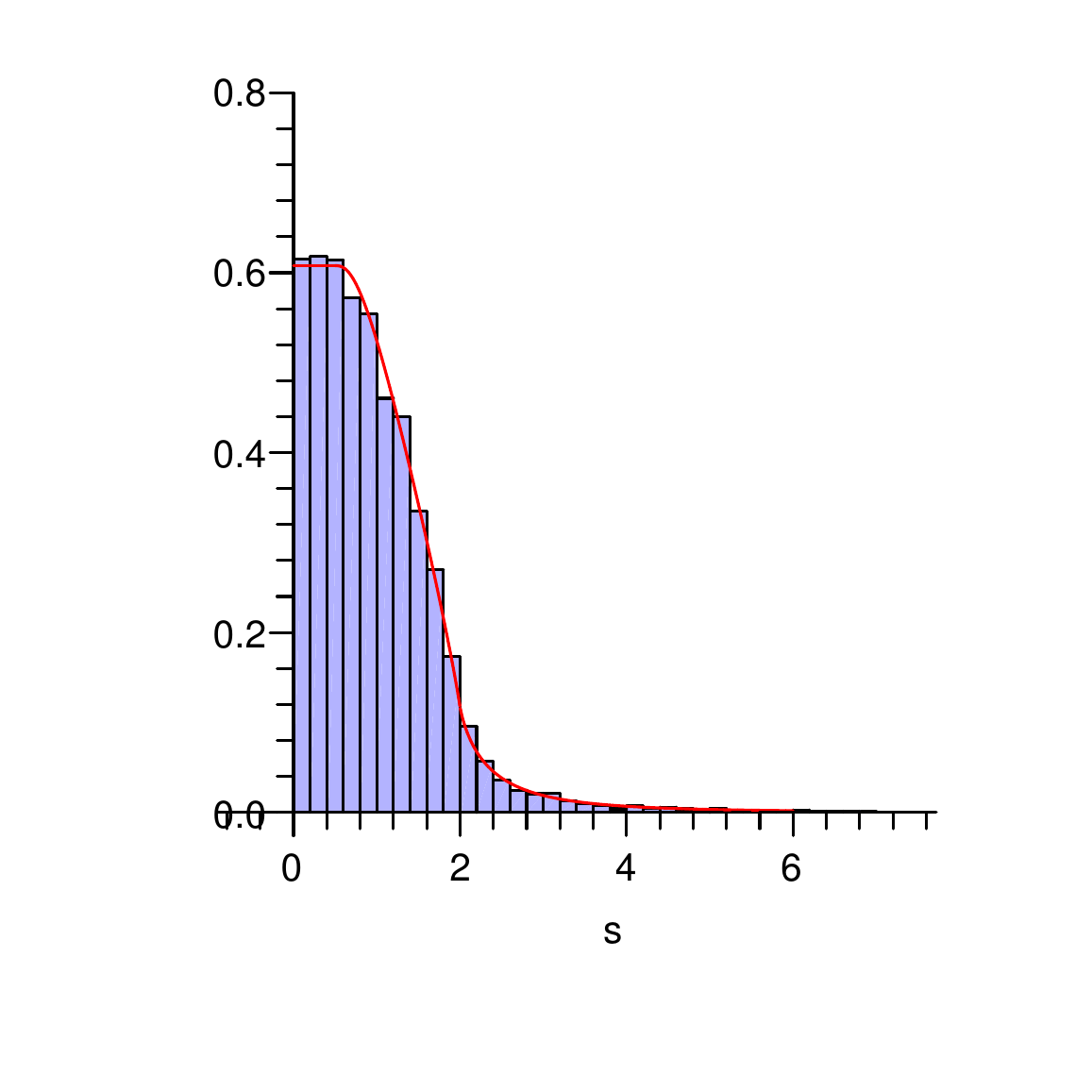}}
\end{picture}
\end{minipage}
\end{center}
\caption{The distribution of gaps in the sequence $\sqrt{n}\bmod 1$, $n=1,\ldots,7765$ (left) and in the directions of the vectors $(m-\sqrt{2},n)\in\RR^2$ with $m\in\ZZ$, $n\in\ZZ_{\geq 0}$, $(m-\sqrt2)^2+n^2< 4900$ (right). The continuous curve is the Elkies-McMullen distribution. Reproduced from \cite{partI}.}\label{figStats}
\end{figure}

{\em Lattices:} Let us now consider a given Euclidean lattice $\scrL\subset\RR^2$ of full rank; the simplest example to keep in mind is $\scrL=\ZZ^2$. We are interested in the directional statistics of the lattice points of $\scrL$ as viewed from a fixed observer located at $\vecq\in\RR^2$. Let $\vecy_1,\ldots,\vecy_N$ denote the first $N$ shortest vectors in the shifted lattice $\scrL-\vecq$ (if there are two or more vectors of the same length, pick your favourite order), and record their angles (relative to the horizontal axis, say) as $\theta_1,\ldots,\theta_N\in[0,2\pi)$. Dividing by $2\pi$ and ordering by size produces a set of $\xi_n$ in $[0,1)$, as above for the fractional parts. One of the findings of \cite{partI} is that (a) the gap distribution also converges for this new sequence, and (b) its limit density agrees with the Elkies-McMullen distribution if and only if $\vecq\notin\QQ\scrL$. This is illustrated in Figure~\ref{figStats} (right) for $\scrL=\ZZ^2$ and $\vecq=(\sqrt 2,0)$. 

The fundamental reason why the two limit distributions are the same is that they follow from the equidistribution of two different unipotent translates on the space of affine lattices which both converge to the {\em same} invariant measure, and are integrated against the {\em same} test function. In the present paper we will provide a more intuitive explanation of this surprising phenomenon by formulating the Elkies-McMullen convergence in terms of a natural point process in $\RR^2$ (which is different from the unipotent dynamics on the space of lattices considered in their original paper). The idea is to construct a point set in $\RR^2$ such that (a) its directions exactly reproduce the fractional parts of $\sqrt n$ and (b) it is approximated locally by affine lattices. 

For other aspects of the statistics of $\sqrt n\bmod 1$ we refer the reader to \cite{ElBaz15,Fraczek15,Marklof07,RadTech2024,Sinai13}; for the distribution of directions in affine lattices, see \cite{ElBaz15b,Kim24,partI,Strombergsson}; for general background and applications of spherical averages of point sets, see the introduction of \cite{MarklofVinogradov}.

\section{Random point sets}\label{RPS}

In the following, elements of $\RR^2$ are represented as row vectors. Define the $2\times 2$ matrices
\begin{equation}
D(T)=\begin{pmatrix} T^{-1/2} & 0 \\ 0 & T^{1/2}\end{pmatrix},\qquad
k(\theta)=\begin{pmatrix} \cos\theta & -\sin\theta \\ \sin\theta & \cos\theta \end{pmatrix} ,
\end{equation}
and let $\theta$ be a random variable on $\RR/2\pi\ZZ$ distributed according to an absolutely continuous probability measure $\lambda$. Note that $D(T)$ and $k(\theta)$ have unit determinant and are thus elements of the special linear group $\SLR$. For fixed $\scrL$, $\vecq$ as above, and $T>0$, the random set
$\Xi_T=(\scrL-\vecq) k(\theta) D(T)$ defines a point process in $\RR^2$, i.e., a random counting measure that assigns a unit mass to the location of every element in $\Xi_T$. By abuse of notation, we will use the same symbol for a random point set and the corresponding point process. For more background on point processes and random point sets, see \cite[Section 4]{Stoyan}. A key observation in the proof of \cite[Theorem 1.3]{partI} is that the convergence of the gap distribution for the directions follows from the convergence (in distribution) of the point processes $\Xi_T\Rightarrow\Xi$ for $T\to\infty$ to a limit process $\Xi$. It is proved in \cite{partI} (see Theorem 2.1 and Section 6) that $\Xi$ is given by a random affine lattice whose distribution will depend on the choice of $\vecq$. If $\vecq\notin\QQ\scrL$, then the limit process $\Xi$ is in fact independent of $\vecq$, and distributed according to the unique $\ASL(2,\RR)$-invariant probability measure on the space of affine lattices. Here $\ASL(2,\RR)$ denotes the semidirect product group $\SL(2,\RR)\ltimes\RR^2$ with multiplication law $(M,\vecxi)(M',\vecxi')=(MM',\vecxi M'+\vecxi')$. The (right) action of $(M,\vecxi)$ on $\RR^2$ is defined by $\vecx\mapsto \vecx M+\vecxi$, and the space of affine lattices can be identified with the homogeneous space $X=\GamG$ with $G=\ASLR$, $\Gamma=\ASLZ$. We embed $\SLR$ in $\ASLR$ by $M\mapsto (M,\vecnull)$ and use the shorthand  $M$ for $(M,\vecnull)$.

\begin{figure}
\begin{center}
\includegraphics[width=0.49\textwidth]{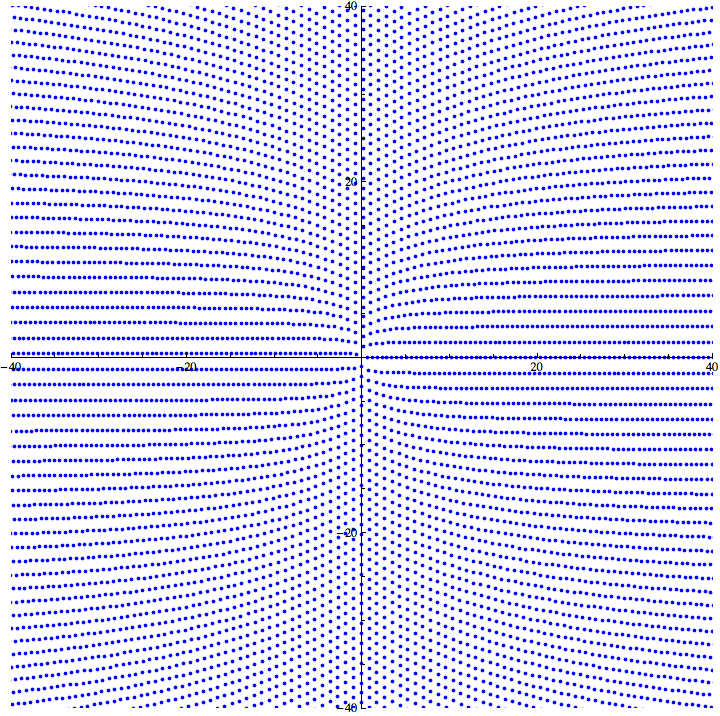}
\includegraphics[width=0.49\textwidth]{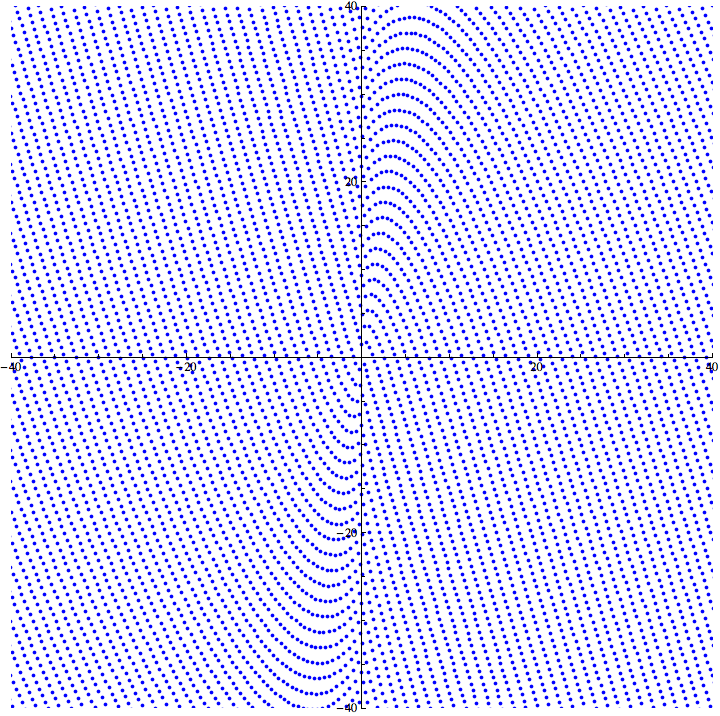}
\end{center}
\caption{{\em Left:}  The point set $\scrP$. {\em Right:}  A realization of $\Theta_T$ with $T=4$ and $\theta=0.7$.} \label{fig1}
\end{figure}

\begin{figure}
\begin{center}
\includegraphics[width=0.49\textwidth]{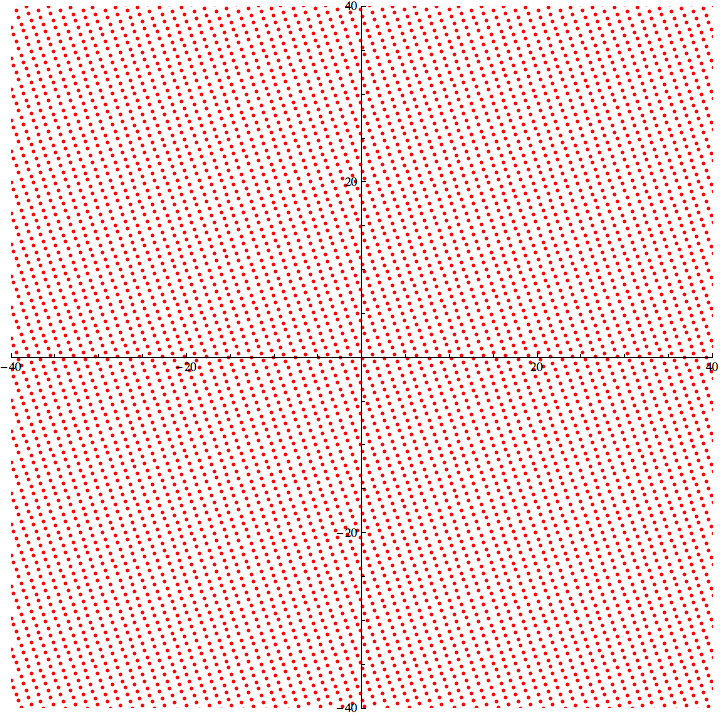}
\includegraphics[width=0.49\textwidth]{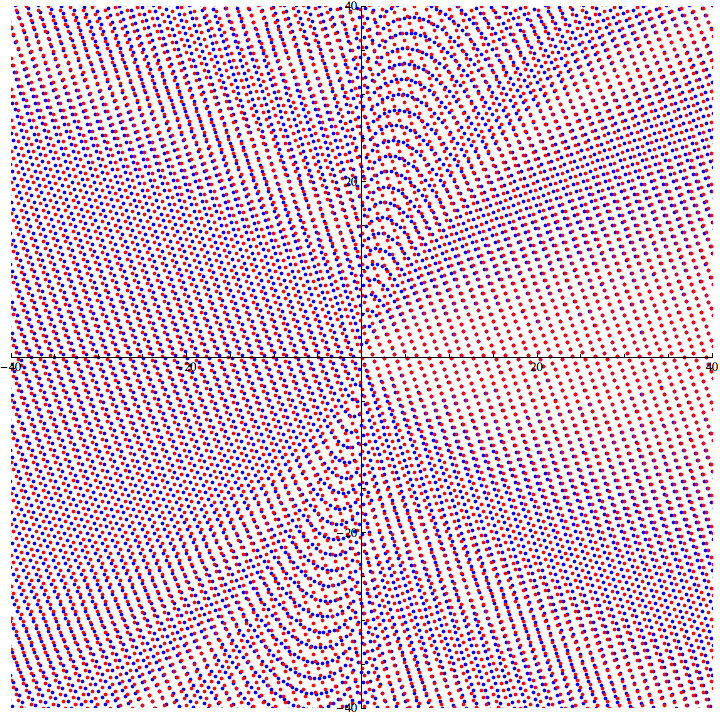}
\end{center}
\caption{{\em Left:}  A realization of the lattice $\Xi_T$ in \eqref{this} corresponding to $T=4$ and $\xi=-\frac{\theta}{2\pi}$ with $\theta=0.7$.
{\em Right:} $\Xi_T$ with $\Theta_T$ superimposed. This illustrates the approximation of $\Theta_T$ by an affine lattice in fixed bounded subsets of the right half plane.} \label{fig3}
\end{figure}

\begin{figure}
\begin{center}
\includegraphics[width=0.49\textwidth]{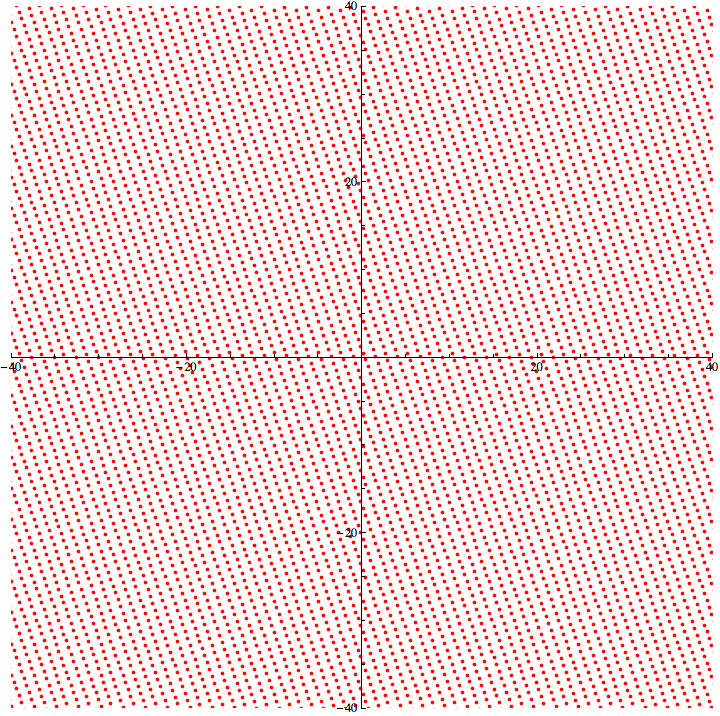}
\includegraphics[width=0.49\textwidth]{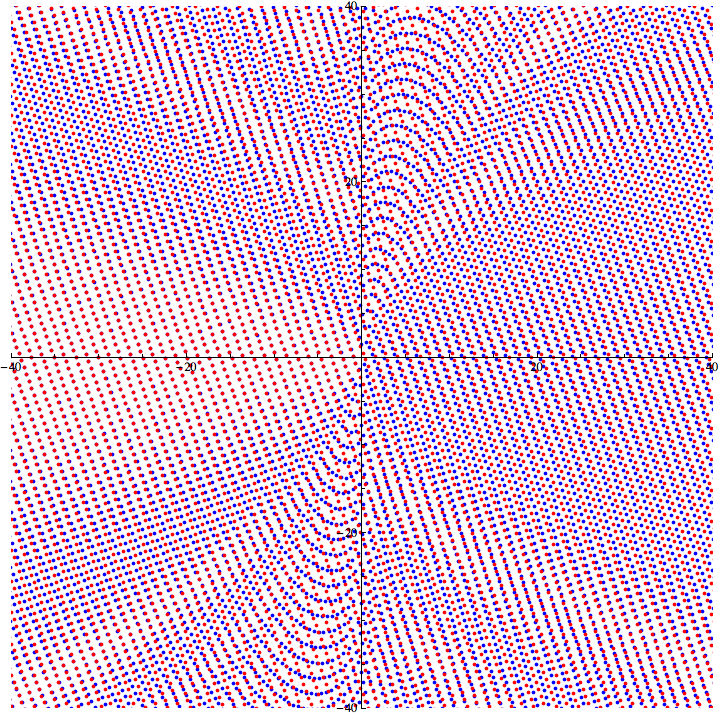}
\end{center}
\caption{{\em Left:}  A realization of the lattice $\widetilde\Xi_T$ in \eqref{this2} corresponding to $T=4$ and $\xi=\zeta+\frac12=-\frac{\theta}{2\pi}$ with $\theta=0.7$. {\em Right:}  $\widetilde\Xi_T$ with $\Theta_T$ superimposed. This illustrates the approximation of $\Theta_T$ by an affine lattice in fixed bounded subsets of the left half plane.} \label{fig5}
\end{figure}

Let us turn to $\sqrt n\bmod 1$. Consider the point set
\begin{equation}\label{P}
\scrP=\bigg\{ \bigg(\sqrt\frac{n}{\pi} \cos\big(2\pi\sqrt{n}\big), \sqrt\frac{n}{\pi} \sin\big(2\pi\sqrt{n}\big)\bigg) \,\bigg|\, n\in\NN \bigg\},
\end{equation}
see Figure \ref{fig1} (left), and the corresponding rotated (through an angle $\theta$) and stretched (by the linear map $D(T)$) point set
\begin{equation}\label{ThetaT}
\scrP k(\theta) D(T)=\bigg\{ \bigg(\sqrt\frac{n}{\pi T} \cos(2\pi\sqrt{n}-\theta), \sqrt{\frac{T n}{\pi}} \sin(2\pi\sqrt{n}-\theta)\bigg) \,\bigg|\, n\in\NN \bigg\},
\end{equation}
see Figure \ref{fig1} (right). For $\theta$ random, we denote by $\Theta_T=\scrP k(\theta) D(T)$ the corresponding random point set.
Note that $\scrP$ is a Delone set with uniform density in $\RR^2$ \cite{Delone}. That is, for any bounded $\scrA\subset\RR^2$ with boundary of Lebesgue measure zero,
\begin{equation}
\lim_{T\to\infty} \frac{\#(\scrP \cap T\scrA)}{T^2} = \vol\scrA.
\end{equation}
This follows from the fact that $(\sqrt n)_{n\in\NN}$ is uniformly distributed modulo one, since we can approximate $\scrA$ by finite unions and intersections of sectors of varying radii; see \cite{Delone} for details.

Let $\HH_-=\RR_{\leq 0}\times\RR$ and $\HH_+=\RR_{\geq 0}\times\RR$ denote the left and right half plane, respectively. We define the congruence subgroup
\begin{equation}
\Gamma_{2,0}(4)= \bigg\{ M\in\SLZ \,\bigg|\, M\equiv \begin{pmatrix} 1 & 0 \\ 0 & 1 \end{pmatrix} \text{or}  \begin{pmatrix} 1 & 2 \\ 0 & 1 \end{pmatrix} \bmod 4 \bigg\} ,
\end{equation}
and furthermore the random point set
\begin{equation}\label{ThetaDef}
\Theta = \big(\ZZ^2 g\cap\HH_+\big) \cup \big(-\big(\big[\ZZ^2+(\tfrac12,-\tfrac14)\big] g\big)\cap\HH_-\big),
\end{equation}
where $g$ is distributed according to the unique $G$-invariant probability measure $\mu$ on the homogeneous space $Y=\Lambda\backslash G$ with $\Lambda=\Gamma_{2,0}(4)\ltimes\ZZ^2$. Note that
\begin{equation}\label{well}
(\tfrac12,-\tfrac14) \Gamma_{2,0}(4) \in  (\tfrac12,-\tfrac14) +\ZZ^2,
\end{equation}
and hence $\big[\ZZ^2+(\tfrac12,-\tfrac14)\big] g$ is independent of the choice of representative of $\Lambda g$ in $Y$.

The purpose of this note is to prove the weak convergence $\Theta_T\Rightarrow\Theta$, and to investigate the properties of the limit process $\Theta$.

\begin{thm}\label{thm1}
If $\theta$ is random according to an absolutely continuous probability measure on $\RR/2\pi\ZZ$, then $\Theta_T\Rightarrow\Theta$ as $T\to\infty$.
\end{thm}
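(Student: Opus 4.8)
The plan is to establish $\Theta_T\Rightarrow\Theta$ by reducing it, as in \cite{Elkies04,partI}, to an equidistribution statement for a translated twisted horocycle on $Y=\Lambda\backslash G$, and then to identify the limiting affine lattice explicitly. Throughout write $\beta=\theta/(2\pi)$, which is again distributed according to an absolutely continuous law $\lambda$ on $\RR/\ZZ$. The first step is to localise. Fix a bounded window $\scrD\subset\RR^2$ with $\vol(\partial\scrD)=0$. A point of $\Theta_T$ arising from $n\in\NN$ has angle $2\pi\sqrt n-\theta$ and, after $D(T)$, second coordinate $\sqrt{Tn/\pi}\,\sin(2\pi\sqrt n-\theta)$ and first coordinate $\sqrt{n/(\pi T)}\,\cos(2\pi\sqrt n-\theta)$; for both to be bounded we need $2\pi\sqrt n-\theta$ within $O(T^{-1})$ of a multiple of $\pi$ and $n\asymp T$. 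Thus, up to a negligible set, the points of $\Theta_T\cap\scrD$ with positive (resp.\ negative) first coordinate come from $n$ with $\{\sqrt n\}$ within $O(T^{-1})$ of $\beta$ (resp.\ $\beta+\tfrac12$).

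Second, I would linearise the square root. For the right half plane write $\sqrt n=\ell+\beta+s$ with $\ell=\lfloor\sqrt n\rfloor$ and $s$ small; integrality of $n=(\ell+\beta+s)^2$ gives, to leading order, $2\ell s\approx-\langle 2\ell\beta+\beta^2\rangle$, where $\langle x\rangle$ is the signed distance from $x$ to the nearest integer and $k$ denotes that nearest integer to $2\ell\beta+\beta^2$. A direct expansion then shows that, uniformly on $\scrD$ and with error $O(T^{-1/2})$, the assignment $(\ell,k)\mapsto$ (point of $\Theta_T$) is the restriction to $\scrD$ of the affine lattice $\ZZ^2 M_T+\vecv_T$, where
\begin{equation}
M_T=\begin{pmatrix}1 & -2\beta\\ 0 & 1\end{pmatrix}D(\pi T),\qquad \vecv_T=(\beta,\beta^2)M_T .
\end{equation}
Equivalently this is $\ZZ^2 g_T(\beta)$ with $g_T(\beta)=\big(I,(\beta,\beta^2)\big)\big(\big(\begin{smallmatrix}1 & -2\beta\\ 0 & 1\end{smallmatrix}\big),\vecnull\big)\big(D(\pi T),\vecnull\big)\in\ASLR$. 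The same computation with $\beta$ replaced by $\beta+\tfrac12$ governs the left half plane, and a short calculation using $\big(\begin{smallmatrix}1 & -1\\ 0 & 1\end{smallmatrix}\big)\in\SLZ$ shows that the negatives of those points form the affine lattice $[\ZZ^2+(\tfrac12,-\tfrac14)]g_T(\beta)$ with the \emph{same} $g_T(\beta)$ --- exactly the structure in \eqref{ThetaDef}.

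The third and principal step is equidistribution. I would first check that $\beta\mapsto\Lambda\,g_T(\beta)$ is well defined on $\RR/\ZZ$: a direct computation gives $g_T(\beta+1)=\big(\,\big(\begin{smallmatrix}1 & -2\\ 0 & 1\end{smallmatrix}\big),(1,-1)\,\big)\,g_T(\beta)$, and since $\big(\begin{smallmatrix}1 & -2\\ 0 & 1\end{smallmatrix}\big)\equiv\big(\begin{smallmatrix}1 & 2\\ 0 & 1\end{smallmatrix}\big)\bmod 4$ the prefactor lies in $\Lambda=\Gamma_{2,0}(4)\ltimes\ZZ^2$; by contrast the half-step $\beta\mapsto\beta+\tfrac12$ involves $\big(\begin{smallmatrix}1 & -1\\ 0 & 1\end{smallmatrix}\big)\notin\Gamma_{2,0}(4)$, which is precisely why the left half plane is not a lattice translate of the right and the shift $(\tfrac12,-\tfrac14)$ is forced. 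This pins down $\Lambda$, hence $Y$. Regarding $g_T(\beta)$ as a point on the closed curve $\beta\mapsto\big(I,(\beta,\beta^2)\big)\big(\big(\begin{smallmatrix}1 & -2\beta\\ 0 & 1\end{smallmatrix}\big),\vecnull\big)$ acted on by the expanding element $D(\pi T)$, the assertion becomes that this curve, pushed by $D(\pi T)$, equidistributes with respect to the Haar measure $\mu$ on $Y$ as $T\to\infty$; because $\lambda$ is absolutely continuous, this is a translated, quadratically twisted horocycle, and its equidistribution follows from the Elkies--McMullen analysis \cite{Elkies04} (equivalently from Ratner's theorem together with non-divergence of the relevant unipotent averages). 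Feeding this into $\EE[F(\Theta_T)]=\int F(\Theta_T(\beta))\,d\lambda(\beta)$ for a convergence-determining family of functionals $F$, and using the $O(T^{-1/2})$ approximation and $\vol(\partial\scrD)=0$ to replace $\Theta_T(\beta)$ by $\big(\ZZ^2 g_T(\beta)\cap\HH_+\big)$ together with its left-half-plane companion, yields $\EE[F(\Theta_T)]\to\int_Y F(\Theta)\,d\mu=\EE[F(\Theta)]$, which is the claim.

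The main obstacle is the equidistribution in the third step: one must show that the quadratically twisted horocycle $\{g_T(\beta)\}$ equidistributes to the full Haar measure on the \emph{correct} cover $Y$, with no escape of mass to the cusp and no collapse onto a proper intermediate space. Getting the arithmetic exactly right --- the congruence condition modulo $4$ and the half-shift $(\tfrac12,-\tfrac14)$ --- is the delicate point, and is what ultimately accounts for the failure of $\SLR$-invariance advertised in the abstract. A secondary, more technical difficulty is to make the passage from $\Theta_T$ to the affine lattice uniform enough to survive integration against $F$: one must control the few points with $n$ of atypical size and those whose angle is near $\pm\tfrac\pi2$ (close to the dividing $y$-axis), and ensure that the nearest-integer assignment $\ell\mapsto k$ is unambiguous for all $\ell$ contributing to $\scrD$, which holds away from a set of $\beta$ of small $\lambda$-measure.
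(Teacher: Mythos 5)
Your proposal is correct and follows essentially the same route as the paper: the same linearisation of $\sqrt{n}$ producing the approximating affine lattices $\ZZ^2 N(\xi)D(\pi T)$ in $\HH_+$ and $-[\ZZ^2+(\tfrac12,-\tfrac14)]N(\xi)D(\pi T)$ in $\HH_-$ (your $g_T(\beta)$ equals $N(-\beta)D(\pi T)$, matching the paper's $\xi=-\theta/2\pi$), followed by joint equidistribution on the cover $Y=\Lambda\backslash G$ with $\Lambda=\Gamma_{2,0}(4)\ltimes\ZZ^2$, which is exactly the paper's Theorem~\ref{thm2}. Your explicit periodicity check $g_T(\beta+1)=\big(\left(\begin{smallmatrix}1&-2\\0&1\end{smallmatrix}\right),(1,-1)\big)g_T(\beta)$ and the remark on $\left(\begin{smallmatrix}1&-1\\0&1\end{smallmatrix}\right)\notin\Gamma_{2,0}(4)$ is a nice way of motivating the congruence group, which the paper encodes instead through the invariance relation \eqref{well}.
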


What we will in fact show is that for any bounded Borel sets $\scrA\subset\RR^2$ with boundary of Lebesgue measure zero and any integer $r$,
\begin{equation}\label{tcon}
\lim_{T\to\infty} \PP\big(\#(\Theta_T\cap \scrA)=r\big)
=\PP\big( \#(\Theta\cap \scrA)=r\big).
\end{equation}
Since $\Theta$ is a simple point process, the convergence in \eqref{tcon} implies the convergence in distribution asserted in Theorem \ref{thm1} \cite[Theorem 16.16]{Kallenberg02}. An important point in this argument is the observation that, by Theorem \ref{cor-5} below, we have $\vol\partial\scrA=0$ if and only if $\#(\Theta\cap \partial\scrA)=0$ almost surely.

We will see below (Theorem \ref{thm4}) that $\Theta$ is not invariant under the standard $\SLR$ action on $\RR^2$, although its two-point function is the same as that of a Poisson process (Theorem \ref{cor5}). Similar processes arise in the Boltzmann-Grad limit of the Lorentz gas in polycrystals \cite{poly}.

Note that Elkies and McMullen \cite{Elkies04} do not see the full process $\Theta$ since their setting corresponds to triangular test sets of the form $\{(x,y) \mid 0<x<1,\; |y|\leq \sigma x \}$, which are contained in the right half plane $\HH_+$. By definition \eqref{ThetaDef}, $\Theta$ is indistinguishable from the random affine lattice $\ZZ^2 g$ when restricted to the right half plane.

The plan for the proof of Theorem \ref{thm1} is to show that, in any bounded set $\scrA\subset\RR^2$, the set $\Theta_T$ is very close to an affine lattice, and then apply a slight extension of the Elkies-McMullen equidistribution theorem, which we will state now.

\section{Equidistribution}

Let 
\begin{equation}
N(\xi)=\bigg(\begin{pmatrix} 1 & 2\xi \\ 0 & 1 \end{pmatrix},(-\xi,-\xi^2)\bigg) .
\end{equation}
Note that the embedding $\RR\to G$, $\xi\mapsto N(\xi)$, defines a group homomorphism. We will in the following represent functions on $X=\GamG$ as functions on $G$ that are $\Gamma$-invariant, i.e., for which $f(\gamma g)=f(g)$ for all $\gamma\in\Gamma$. The integral of such a function over $X$ should be viewed as an integral restricted to the fundamental domain of $\Gamma$ in $G$.

\begin{thm}\label{thm2}
Let $\lambda$ be an absolutely continuous probability measure on $\TT=\RR/\ZZ$. Then, for any bounded continuous function $f:X\times X\to\RR$,
\begin{equation}
\lim_{T\to\infty}\int_\TT f\big(N(\xi)D(T),(1,(\tfrac12,-\tfrac14))N(\xi)D(T)\big) d\lambda(\xi)
= \int_{Y} f\big(g,(1,(\tfrac12,-\tfrac14))g\big) d\mu(g) . 
\end{equation}
\end{thm}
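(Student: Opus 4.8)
The plan is to collapse the two--point expression on the left into a single equidistribution statement on the finite cover $Y=\Lambda\backslash G$, and then to run the Elkies--McMullen argument there. Write $h=(1,(\tfrac12,-\tfrac14))\in G$ for the fixed translation appearing in both arguments, so that the integrand is $f\big(N(\xi)D(T),\,hN(\xi)D(T)\big)$. The first step is to observe that $h\Lambda h^{-1}\subseteq\Gamma$: for $\gamma=(M,\vecn)$ with $M\in\Gamma_{2,0}(4)$ and $\vecn\in\ZZ^2$ one computes $h\gamma h^{-1}=\big(M,(\tfrac12,-\tfrac14)(M-1)+\vecn\big)$, and the congruence conditions defining $\Gamma_{2,0}(4)$ are exactly what force $(\tfrac12,-\tfrac14)(M-1)\in\ZZ^2$; this is the content of \eqref{well}. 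Consequently $F(g):=f(\Gamma g,\Gamma hg)$ is a well defined bounded continuous function on $Y$, since for every $\gamma\in\Lambda$ one has $F(\gamma g)=f(\Gamma g,\Gamma(h\gamma h^{-1})hg)=F(g)$. Viewing $N(\xi)D(T)$ as a point of $Y$, both sides of the asserted identity become integrals of $F$, and the theorem reduces to the single--space convergence
\[
\lim_{T\to\infty}\int_\TT F\big(N(\xi)D(T)\big)\,d\lambda(\xi)=\int_Y F\,d\mu .
\]

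Next I would prove this convergence as an equidistribution statement for translates of a unipotent trajectory, which is precisely the slight extension of \cite{Elkies04} (cf.\ \cite{partI}) advertised in the introduction. Since $\xi\mapsto N(\xi)$ is a one--parameter unipotent subgroup of $G=\ASLR$, the family $\{N(\xi)D(T):\xi\in\TT\}$ is a bounded piece of a unipotent orbit pushed by the diagonal element $D(T)$; as $T\to\infty$ the conjugation $D(T)^{-1}N(\xi)D(T)$ expands the unipotent direction by a factor $T$. Let $\mu_T$ denote the push--forward of $\lambda$ under $\xi\mapsto N(\xi)D(T)$, a probability measure on $Y$; the goal is to show $\mu_T\Rightarrow\mu$.

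The equidistribution then proceeds in the usual three steps. First, \emph{non--divergence}: because the trajectory is polynomial in $\xi$, the Dani--Margulis uniform non--divergence estimates for unipotent flows show that no mass escapes into the cusp of $Y$, so $\{\mu_T\}$ is tight and every weak--$*$ limit $\mu_\infty$ is a probability measure. Second, \emph{invariance}: the standard shearing argument for translated unipotent trajectories, using the expansion noted above together with the absolute continuity of $\lambda$, shows that any such $\mu_\infty$ is invariant under a nontrivial connected unipotent subgroup of $G$. Third, \emph{classification and identification}: by Ratner's measure classification $\mu_\infty$ is the homogeneous measure of a closed orbit, and it remains to exclude every proper possibility and conclude $\mu_\infty=\mu$. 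I expect this last step to be the main obstacle, since it is exactly here that the arithmetic of $\Lambda=\Gamma_{2,0}(4)\ltimes\ZZ^2$ enters: one must verify that the trajectory is trapped in no intermediate closed subgroup orbit and, crucially, that it equidistributes in the \emph{full} cover $Y$ rather than in some proper subcover of $X=\GamG$. Concretely this amounts to a connectedness and generation check in $\SL(2,\ZZ/4\ZZ)$ guaranteeing that the relevant orbit closure is all of $Y$, while the absolute continuity of $\lambda$ ensures the initial curve charges no lower--dimensional invariant subvariety, which is what powers the linearization underlying the second and third steps. Combining the three steps yields $\mu_T\Rightarrow\mu$, and hence the theorem.
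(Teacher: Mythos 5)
Your opening step coincides exactly with the paper's proof: the paper likewise collapses the pair into the single test function $h(g)=f\big(g,(1,(\tfrac12,-\tfrac14))g\big)$ and observes, via \eqref{well}, that it is $\Lambda$-invariant, bounded and continuous, hence descends to $Y$; your conjugation computation $h\gamma h^{-1}=\big(M,(\tfrac12,-\tfrac14)(M-1)+\vecn\big)\in\Gamma$ is just a more explicit rendering of the same fact. The two arguments part ways in the second half. The paper simply invokes \cite[Theorem 2.2]{Elkies04}, notes that it extends to any quotient by a finite-index subgroup of $\Gamma$, and upgrades from the uniform measure on an interval (which is what Elkies--McMullen assume) to general absolutely continuous $\lambda$ by a monotone class argument; you instead sketch a from-scratch Ratner proof (Dani--Margulis non-divergence, shearing, measure classification). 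That route is the standard proof of the cited theorem and is viable, but as written it stops precisely at the identification step, which you flag as the expected ``main obstacle'' without executing it. Moreover, your diagnosis of that obstacle is off: no ``generation check in $\SL(2,\ZZ/4\ZZ)$'' is required, and the worry about equidistributing in a proper subcover is a red herring. The exclusion of proper closed orbits can be carried out entirely downstairs in $X$, exactly as in Elkies--McMullen (using the nonlinearity of the section $\xi\mapsto N(\xi)$ together with the absolute continuity of $\lambda$), since the projection $Y\to X$ maps closed orbits to closed orbits and preserves the mass estimates; and once the weak-$*$ limit is known to be invariant under all of $G$, the transitivity of the right $G$-action on $Y=\Lambda\backslash G$ forces it to equal $\mu$ by uniqueness of the invariant probability measure --- individual sheets of the finite cover are not $G$-invariant, so no concentration on a subcover can occur. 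This is exactly why the finite-cover extension is routine and why the paper can assert it in one line. In summary: identical reduction to the paper, a heavier but legitimate substitute for the citation in the dynamical half, with the one substantive step left unexecuted (though it is standard and is precisely what the paper's citation covers).
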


\begin{proof}
The space $Y$ is a finite cover of $X$. The equidistribution theorem \cite[Theorem 2.2]{Elkies04} extends to $Y$ (and in fact to any quotient by a finite-index subgroup of $\Gamma$): For any bounded continuous function $h:Y\to\RR$, we have 
\begin{equation}\label{equi}
\lim_{T\to\infty}\int_{\TT} h\big(N(\xi)D(T)\big) d\lambda(\xi) 
= \int_{Y} h(g) d\mu(g) .
\end{equation}
(Note that \cite[Theorem 2.2]{Elkies04} assumes $\lambda$ is the uniform probability measure on the interval $[0,p]$. The statement can be extended to general absolutely continuous probability $\lambda$ by a monotone class argument, where the density of $\lambda$ is approximated by finite linear combination of indicator functions of bounded intervals.)
To complete the proof, choose $h(g)=f\big(g, (1,(\tfrac12,-\tfrac14))g\big)$ and note that due to \eqref{well} this $h$ is indeed $\Lambda$-invariant (hence can be identified as a function on $Y$) and bounded continuous.
\end{proof}

Effective versions of the equidistribution theorem of \cite{Elkies04}, which is a consequence of Ratner's measure classification theorem \cite{Morris,Ratner91}, have recently been established in \cite{Browning13}.

\section{Properties of the limit process}

The intensity of a random point set $\Theta$ is defined by
\begin{equation}\label{siegel}
I_f(\Theta) =  \EE \sum_{\vecy\in\Theta} f(\vecy),
\end{equation}
where $f\in\C_0(\RR^2)$, i.e.\ continuous and with compact support. The following claim states that the intensity measure of our process is the Lebesgue measure.

\begin{thm}\label{cor-5}
For $f\in\L^1(\RR^2)$, 
\begin{equation}
I_f(\Theta) = \int_{\RR^2} f(\vecx) d\vecx .
\end{equation}
\end{thm}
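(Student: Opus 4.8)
The plan is to combine the affine Siegel mean value formula with the observation that each of the two half-plane pieces of $\Theta$ is governed by the invariant measure $\mu_X$ on the space of affine lattices $X=\GamG$. Recall the affine Siegel formula,
\begin{equation}\label{aSiegel}
\int_X \sum_{\vecy\in\ZZ^2 g}\phi(\vecy)\,d\mu_X(g)=\int_{\RR^2}\phi(\vecx)\,d\vecx\qquad(\phi\in\L^1(\RR^2)).
\end{equation}
This is elementary: $\mu_X$ disintegrates over the space of linear lattices $\SLSL$ with fibre measures equal to the uniform probability measures on the translation tori, and for each fixed linear lattice the fibre integral already equals $\int_{\RR^2}\phi$ by unfolding. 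By linearity of expectation it then suffices to treat the two pieces in \eqref{ThetaDef} separately, showing that the right half-plane piece contributes $\int_{\HH_+}f$ and the left half-plane piece contributes $\int_{\HH_-}f$; these sum to $\int_{\RR^2}f$ since the dividing line carries no Lebesgue mass.

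For the right half-plane piece $\ZZ^2 g\cap\HH_+$ the Siegel transform $g\mapsto\sum_{\vecy\in\ZZ^2 g}\phi(\vecy)$ is $\Gamma$-invariant, hence descends through the finite cover $Y\to X$; since the invariant probability measure $\mu$ on $Y$ pushes forward to $\mu_X$, formula \eqref{aSiegel} with $\phi=f\,\mathbbm{1}_{\HH_+}$ gives $\EE\sum_{\vecy\in\ZZ^2 g\cap\HH_+}f(\vecy)=\int_{\HH_+}f$. For the left half-plane piece we substitute $\vecy=-\vecz$, turning the sum into $\sum_{\vecz\in\scrL_-\cap\HH_+}f(-\vecz)$ with $\scrL_-=\big[\ZZ^2+(\tfrac12,-\tfrac14)\big]g$. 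The crux is to identify the law of $\scrL_-$ as a random point of $X$. Setting $w=(1,(\tfrac12,-\tfrac14))\in G$ we have $\scrL_-=\ZZ^2(wg)$, so this law is the pushforward of $\mu$ under $\Lambda g\mapsto\Gamma wg$. This map is well defined precisely because $w\Lambda w^{-1}\subset\Gamma$: a direct computation with the multiplication law gives
\begin{equation}
w(\gamma_0,\vecn)w^{-1}=\big(\gamma_0,\;(\tfrac12,-\tfrac14)\gamma_0-(\tfrac12,-\tfrac14)+\vecn\big),
\end{equation}
which lies in $\ASLZ$ if and only if $(\tfrac12,-\tfrac14)\gamma_0\equiv(\tfrac12,-\tfrac14)\bmod\ZZ^2$, i.e.\ exactly the congruence \eqref{well} satisfied by $\gamma_0\in\Gamma_{2,0}(4)$. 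Since the map $\Lambda g\mapsto w\Lambda w^{-1}(wg)$ is a right-$G$-equivariant bijection $Y\to w\Lambda w^{-1}\backslash G$, it carries $\mu$ to the invariant probability measure there, which in turn covers $X$, taking that measure to $\mu_X$. Applying \eqref{aSiegel} with $\phi(\vecz)=f(-\vecz)\,\mathbbm{1}_{\HH_+}(\vecz)$ and changing variables $\vecx=-\vecz$ then gives $\int_{\HH_-}f$, and adding the two contributions completes the proof.

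The main obstacle is this identification of the law of $\scrL_-$: one has to verify that conjugating $\Lambda$ by the non-integer shift $w$ lands inside $\ASLZ$, which is the whole point of the congruence \eqref{well} and the structural reason the shift $(\tfrac12,-\tfrac14)$ and the level-four group $\Gamma_{2,0}(4)$ were chosen. The remaining steps are routine bookkeeping. Finally, although the intensity \eqref{siegel} is phrased for $f\in\C_0(\RR^2)$, the identity for $f\in\L^1$ follows by decomposing $f$ into positive and negative parts and applying monotone convergence to approximations by compactly supported continuous functions.
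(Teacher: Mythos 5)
Your proposal is correct, but it takes a genuinely different route from the paper's own proof. The paper argues softly: both $\Xi=\ZZ^2 g$ and $\widetilde\Xi=-\big(\big[\ZZ^2+(\tfrac12,-\tfrac14)\big]g\big)$, with $g\sim\mu$, are translation-invariant point processes of density one (translation invariance is immediate from right-invariance of $\mu$ under the translations $(1,\vecx)$), so Campbell's formula gives $I_{f_\pm}(\Xi)=I_{f_\pm}(\widetilde\Xi)=\int f_\pm$, and splitting $f=f_++f_-$ over the two half planes finishes the proof. You instead compute the intensity of each piece from the affine Siegel integral formula, which forces you to identify the exact law of $\big[\ZZ^2+(\tfrac12,-\tfrac14)\big]g$ as a random point of $X$: your conjugation computation
\begin{equation}
w(\gamma_0,\vecn)w^{-1}=\big(\gamma_0,\;(\tfrac12,-\tfrac14)\gamma_0-(\tfrac12,-\tfrac14)+\vecn\big),\qquad w=\big(1,(\tfrac12,-\tfrac14)\big),
\end{equation}
together with \eqref{well}, shows $w\Lambda w^{-1}\subset\Gamma$, so that $\Lambda g\mapsto\Gamma wg$ is a well-defined $G$-equivariant map pushing $\mu$ to $\mu_X$. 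This is more work than the paper's Campbell argument, but it buys more: it pins down the full distribution of the left half-plane piece (not merely its intensity measure), and it exposes the structural reason for the choice of shift $(\tfrac12,-\tfrac14)$ and congruence group $\Gamma_{2,0}(4)$, which the paper invokes only for the well-definedness of $\Theta$. Conversely, the paper's argument is shorter and more robust, since translation invariance and unit density hold for any choice of shift vector. Your closing remark extending the identity from $\C_0$ to $\L^1$ by positive/negative parts and monotone convergence is fine; the paper leaves this point implicit.
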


\begin{proof}
The point processes defined by the point sets $\Xi = \ZZ^2 g$, $\widetilde\Xi=-([\ZZ^2+(\tfrac12,-\tfrac14)] g)$, with $g\in Y$ randomly distributed according to $\mu$, are translation-invariant and have asymptotic density equal to one. ($\Xi$ is in fact the same process as discussed in the introduction, despite the fact that we are now working with $\Lambda$ rather than $\ASL(2,\ZZ)$.) Therefore, by Campbell's formula, 
\begin{equation}\label{intm}
I_f(\Xi) = \int_{\RR^2} f(\vecx) d\vecx,\qquad I_f(\widetilde\Xi) = \int_{\RR^2} f(\vecx) d\vecx.
\end{equation}
Thus, for $f_\pm(\vecx)=f(\vecx)\chi_{\HH_\pm}(\vecx)$ the restriction of $f$ to the respective half plane,
\begin{equation}
I_f(\Theta) = I_{f_+}(\Xi) + I_{f_-}(\widetilde\Xi) =\int_{\RR^2} f_+(\vecx) d\vecx +\int_{\RR^2} f_-(\vecx) d\vecx =\int_{\RR^2} f(\vecx) d\vecx .
\end{equation}
\end{proof}

Theorem \ref{cor-5} implies the useful fact that for any Borel set $\scrA\subset\RR^2$ we have $\vol\partial\scrA=0$ if and only if $\#(\Theta\cap \partial\scrA)=0$ almost surely. To see this choose $f$ as the indicator function of $\partial\scrA$, which yields $\EE \,\#(\Theta\cap \partial\scrA) = \vol\partial\scrA$.

Let us now consider second-order correlations. 
For $f\in\C_0(\RR^2\times\RR^2)$, define the two-point function $R_f^\pm : Y \to \RR$ by
\begin{equation}
R_f^+(g)=\sum_{\vecm_1\neq \vecm_2\in\ZZ^2} f(\vecm_1g,\vecm_2g) ,
\qquad R_f^-(g)=\sum_{\vecm_1,\vecm_2\in\ZZ^2} f(\vecm_1g,-[\vecm_2(1,(\tfrac12,-\tfrac14))g]) .
\end{equation}
This defines linear functionals $f\mapsto R_f^\pm$ from $\L^1(\RR^2\times\RR^2)$ to $\L^1(Y,d\mu)$.

\begin{thm}\label{thm3}
For $f\in\L^1(\RR^2\times\RR^2)$,
\begin{equation}
\int_Y R_f^\pm(g) d\mu(g)= \int_{\RR^2\times\RR^2} f(\vecx,\vecy) d\vecx d\vecy.
\end{equation}
\end{thm}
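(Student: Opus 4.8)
The plan is to establish the identity first for $f\geq 0$, where every interchange of summation and integration below is justified by Tonelli, and then to pass to general $f\in\L^1$ by splitting $f=f_+-f_-$; since the right-hand side is linear in $f$ and all quantities are finite, this suffices. I would begin by recording the invariance of the two transforms: $R_f^+$ is in fact invariant under all of $\ASLZ$ and hence descends to $X$, while $R_f^-$ is only $\Lambda$-invariant, the congruence property \eqref{well} being exactly what is needed in order to reindex the inner sum after a substitution $\vecm_2\mapsto\vecm_2\gamma_0+\veck$ with $(\gamma_0,\veck)\in\Lambda$. Throughout I use the fibration of $Y$ over $\Gamma_{2,0}(4)\backslash\SLR$ whose fibre over $M$ is the torus $\RR^2/\ZZ^2 M$, the invariant measure being the product of Haar measure on the base with Lebesgue measure (total mass one) on the fibre.

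The core step is an unfolding of the base point. Writing $g=(M,\vecxi)$ and reindexing the double sum by $\vecm_1\in\ZZ^2$ together with the difference $\vecd=\vecm_2-\vecm_1$, one checks that in each summand the second argument depends on $\vecm_1$ only through the first argument $\vecx=\vecm_1 M+\vecxi$. Hence summing over $\vecm_1$ and integrating over the fibre collapses to a single integral over $\RR^2$: for fixed $M$,
\begin{equation*}
\int_{\RR^2/\ZZ^2 M}\! R_f^+(M,\vecxi)\,d\vecxi=\sum_{\vecd\neq\vecnull}\phi(\vecd M),
\qquad
\int_{\RR^2/\ZZ^2 M}\! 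R_f^-(M,\vecxi)\,d\vecxi=\sum_{\vecd\in\ZZ^2}\psi\big((\vecd+(\tfrac12,-\tfrac14))M\big),
\end{equation*}
where $\phi(\vecw)=\int_{\RR^2} f(\vecx,\vecx+\vecw)\,d\vecx$ and $\psi(\vecw)=\int_{\RR^2} f(\vecx,-\vecx-\vecw)\,d\vecx$, and where a change of variables gives $\int_{\RR^2}\phi=\int_{\RR^2}\psi=\int_{\RR^2\times\RR^2} f$.

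It then remains to average over $M$. For the $+$ case, since $R_f^+$ descends to $X$ and $\mu$ projects to the invariant probability measure there, the surviving sum $\sum_{\vecd\neq\vecnull}\phi(\vecd M)$ is the Siegel transform of $\phi$, and Siegel's mean value theorem on $\SLSL$ yields $\int_Y R_f^+\,d\mu=\int_{\RR^2}\phi$, as required. For the $-$ case the sum runs over the full coset $\ZZ^2+(\tfrac12,-\tfrac14)$, which avoids the origin because $(\tfrac12,-\tfrac14)\notin\ZZ^2$. Unfolding $\Gamma_{2,0}(4)\backslash\SLR\to\SLSL$ and reindexing, the $M$-average becomes, up to the index $[\SLZ:\Gamma_{2,0}(4)]$, the average over $\SLSL$ of $\sum_{\vecv\in\Sigma}\psi(\vecv M)$, where $\Sigma=\ZZ^2+\Omega$ and $\Omega$ is the $\SLZ$-orbit of $(\tfrac12,-\tfrac14)\bmod\ZZ^2$. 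The set $\Sigma$ is $\SLZ$-invariant, locally finite and satisfies $\vecnull\notin\Sigma$, so a mean value formula applies and produces $(\text{density of }\Sigma)\cdot\int_{\RR^2}\psi$; since $\Sigma$ is a disjoint union of $|\Omega|$ translates of $\ZZ^2$, its density equals $|\Omega|$, which cancels the orbit-counting factor to leave exactly $\int_{\RR^2}\psi$.

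I expect the main obstacle to be precisely the bookkeeping in the $-$ case: verifying that the index $[\SLZ:\Gamma_{2,0}(4)]$, the orbit size $|\Omega|$ and the density of $\Sigma$ combine to give the clean constant $1$ in front of $\int_{\RR^2}\psi$. This is where the specific shift $(\tfrac12,-\tfrac14)$, a point of exact order $4$ in $\RR^2/\ZZ^2$, and the group $\Gamma_{2,0}(4)$ enter: one computes that $\Omega$ consists of all $12$ primitive residues modulo $4$, so that $\Sigma=\tfrac14\{\vecu\in\ZZ^2:\gcd(u_1,u_2)\ \mathrm{odd}\}$ has density $12=|\Omega|$. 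The required extension of Siegel's formula to origin-free $\SLZ$-invariant sets follows from Siegel's formula for primitive vectors together with summation over odd dilates, the factor $\sum_{k\ \mathrm{odd}}k^{-2}=\tfrac34\zeta(2)$ conspiring with $\zeta(2)^{-1}$ and the scaling Jacobian $16$ to reproduce the density. I would isolate this origin-free mean value statement as a short lemma and apply it uniformly to both $\Sigma$ (for $R_f^-$) and $\ZZ^2\setminus\{\vecnull\}$ (for $R_f^+$).
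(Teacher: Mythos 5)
Your proof is correct, and its skeleton coincides with the paper's: the same fibre unfolding over $Y_0=\Gamma_{2,0}(4)\backslash\SLR$ (collapsing the $\vecm_1$-sum and the torus integral into a single $\RR^2$-integral, with the second argument depending only on the difference vector) reduces both identities to Siegel-type mean value formulas --- the classical one for $R_f^+$ and the shifted formula $\int_{Y_0}\sum_{\vecm}\tilde f\big((\vecm+(\tfrac12,-\tfrac14))M\big)\,d\mu_0(M)=\int_{\RR^2}\tilde f(\vecx)\,d\vecx$ for $R_f^-$. The difference lies in how that shifted formula is obtained: the paper cites it (``direct computation or the general Siegel--Veech formula'' \cite{Veech}, or \cite[Theorem 5.1]{quasi}) and cites \cite[Proposition A.3]{ElBaz15b} for the $+$ case, whereas you actually carry out the direct computation, unfolding $Y_0$ to $\SLSL$, noting that the shift runs over the $\SLZ$-orbit $\Omega$ of the exact order-$4$ point $(\tfrac12,-\tfrac14)\bmod\ZZ^2$, and applying an origin-free Siegel formula to $\Sigma=\tfrac14\{\vecu\in\ZZ^2:\gcd(u_1,u_2)\ \text{odd}\}$. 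Your bookkeeping checks out: $[\SLZ:\Gamma_{2,0}(4)]=48/2=24$ (the image of $\Gamma_{2,0}(4)$ in $\SL(2,\ZZ/4\ZZ)$, a group of order $48$, has order $2$), $|\Omega|=12$ by transitivity of $\SL(2,\ZZ/4\ZZ)$ on vectors of exact order $4$, each $\omega\in\Omega$ is hit $[\operatorname{Stab}:\Gamma_{2,0}(4)]=24/12=2$ times in the coset sum, so the normalized coset average equals $\tfrac{1}{12}\sum_{\vecv\in\Sigma}\psi(\vecv M)$, and the density of $\Sigma$, namely $16\cdot\tfrac34=12$ (equivalently $\zeta(2)^{-1}\cdot\tfrac34\zeta(2)\cdot16$ via primitive vectors and odd dilates), cancels this exactly. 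One presentational caution: make the multiplicity $2$ explicit --- ``up to the index $[\SLZ:\Gamma_{2,0}(4)]$'' read literally as a factor $\tfrac1{24}$ would yield the wrong constant $\tfrac12$; your final accounting (normalized factor $1/|\Omega|$ against density $=|\Omega|$) shows you used the right one. What your route buys is a fully self-contained proof avoiding \cite{Veech,quasi}; what the paper's buys is brevity and an input (the model-set Siegel--Veech formula) that handles arbitrary shifts and congruence groups without this case-specific arithmetic.
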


\begin{proof}
The relation for $R_f^+$ is proved in \cite[Proposition A.3]{ElBaz15b}. The other is analogous: In view of the density of $\C_0$ in $\L^1$ and the Lebesgue monotone convergence theorem, it suffices to prove the claim for $f\in\C_0(\RR^2\times\RR^2)$. Let $\mu_0$ denote the $\SLR$-invariant probability measure on $Y_0=\Gamma_{2,0}(4)\backslash\SLR$. Then
\begin{equation}
\begin{split}
\int_Y R_f^-(g) d\mu(g) 
& = \int_{Y_0} \int_{\TT^2}
\sum_{\vecm_1,\vecm_2} f((\vecm_1+\vecy)M,-(\vecm_2+\vecy+(\tfrac12,-\tfrac14))M) d\vecy d\mu_0(M)
 \\
& = \int_{Y_0} \int_{\RR^2}
\sum_{\vecm} f(\vecy,-(\vecm+(\tfrac12,-\tfrac14))M-\vecy) d\vecy d\mu_0(M) .
\end{split}
\end{equation}
The Siegel integral formula for $\tilde f\in\C_0(\RR^2)$ reads
\begin{equation}
\int_{Y_0} \sum_{\vecm\in\ZZ^2} \tilde f((\vecm+(\tfrac12,-\tfrac14))M) d\mu_0(M) = \int_{\RR^2} \tilde f(\vecx) d\vecx.
\end{equation}
This follows either by direct computation or the general Siegel-Veech formula \cite{Veech}; it is in fact also a special case of the generalized Siegel-Veech formula for Euclidean model sets \cite[Theorem 5.1]{quasi}.
Applying this with 
\begin{equation}
\tilde f(\vecx)= \int_{\RR^2} f(\vecy,-\vecx-\vecy) d\vecy
\end{equation}
proves the theorem.
\end{proof}

The two-point intensity of the random point process $\Theta$ is defined by
\begin{equation}
K_f(\Theta)= \EE \sum_{\vecy_1\neq\vecy_2\in\Theta} f(\vecy_1,\vecy_2),
\end{equation}
where $f\in\C_0(\RR^2\times\RR^2)$.
The following corollary of Theorem \ref{thm3} shows that the the two-point intensity of $\Theta$ is the same as that of a Poisson process. This extends the observation of \cite{ElBaz15} for $\sqrt n\bmod 1$ to the full two-dimensional process.

\begin{thm}\label{cor5}
For $f\in\L^1(\RR^2\times\RR^2)$, 
\begin{equation}
K_f(\Theta) = \int_{\RR^2\times\RR^2} f(\vecx_1,\vecx_2) d\vecx_1 d\vecx_2 .
\end{equation}
\end{thm}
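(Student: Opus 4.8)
The plan is to split the two–point intensity according to which half–plane each of the two points occupies and to recognise each resulting piece as one of the two–point functionals controlled by Theorem \ref{thm3}. Write $f=f_{++}+f_{+-}+f_{-+}+f_{--}$, where $f_{s_1 s_2}(\vecx_1,\vecx_2)=f(\vecx_1,\vecx_2)\chi_{\HH_{s_1}}(\vecx_1)\chi_{\HH_{s_2}}(\vecx_2)$ for $s_1,s_2\in\{+,-\}$. First I would observe that for $\mu$–almost every $g$ no point of $\Xi=\ZZ^2 g$ or of $\widetilde\Xi=-([\ZZ^2+(\tfrac12,-\tfrac14)]g)$ lies on the vertical axis $\{x=0\}$: for fixed $\vecm$ and $M$ this is a codimension–one condition on the translation part of $g$, hence negligible after integrating over $Y$. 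Consequently $\Theta\cap\HH_+=\Xi\cap\HH_+$ and $\Theta\cap\HH_-=\widetilde\Xi\cap\HH_-$ form an almost sure disjoint partition of $\Theta$, so that $K_f(\Theta)=\sum_{s_1,s_2}K_{f_{s_1 s_2}}(\Theta)$, and, the axis being Lebesgue–null, $\int_{\RR^2\times\RR^2}f=\sum_{s_1,s_2}\int_{\RR^2\times\RR^2}f_{s_1 s_2}$. By linearity it then suffices to treat the four pieces separately.

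For $f_{++}$ both points lie in $\Xi$, and the ordered sum over distinct pairs in $\Theta\cap\HH_+$ is exactly $R_{f_{++}}^+(g)$: the indicators built into $f_{++}$ enforce membership in $\HH_+$, while the condition $\vecm_1\neq\vecm_2$ enforces distinctness. Theorem \ref{thm3} gives $K_{f_{++}}(\Theta)=\int_Y R_{f_{++}}^+\,d\mu=\int f_{++}$. For $f_{+-}$ the first point lies in $\Xi$ and the second in $\widetilde\Xi$; since they occupy opposite half–planes they are automatically distinct, and the corresponding ordered sum is $R_{f_{+-}}^-(g)$, whence $K_{f_{+-}}(\Theta)=\int f_{+-}$, again by Theorem \ref{thm3}. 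The term $f_{-+}$ is the mirror image: applying Theorem \ref{thm3} to the swapped integrand $h(\vecx_1,\vecx_2)=f_{-+}(\vecx_2,\vecx_1)$, which is precisely of the form handled by $R^-$ (first argument from $\Xi$, second from $\widetilde\Xi$), and then relabelling $\vecx_1\leftrightarrow\vecx_2$ in the resulting Lebesgue integral, yields $K_{f_{-+}}(\Theta)=\int f_{-+}$.

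The only piece not literally covered by Theorem \ref{thm3} is $f_{--}$, where both points lie in $\widetilde\Xi$. Here I would introduce the within–lattice two–point functional $R_f^{--}(g)=\sum_{\vecm_1\neq\vecm_2}f(-[(\vecm_1+(\tfrac12,-\tfrac14))g],-[(\vecm_2+(\tfrac12,-\tfrac14))g])$ and prove $\int_Y R_f^{--}\,d\mu=\int f$ by the same unfolding used in Theorem \ref{thm3}: decompose $g=(M,\vecy)$ over $Y_0\times\TT^2$, substitute $\vecn=\vecm_2-\vecm_1\neq\vecnull$, unfold the remaining $\vecm_1$–sum against the $\vecy$–integral so that one argument ranges freely over $\RR^2$, and finally apply the Siegel formula for the lattice $\ZZ^2 M$. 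The latter is valid for the finite–index group $\Gamma_{2,0}(4)$ because $\sum_{\vecn\neq\vecnull}\phi(\vecn M)$ is $\SLZ$–left–invariant, hence descends to $\SLZ\backslash\SLR$, and its $\Gamma_{2,0}(4)$–average with respect to the normalised $\mu_0$ coincides with its $\SLZ$–average; the reflection $\vecx\mapsto-\vecx$ preserves Lebesgue measure, so this is just the statement that the affine lattice $\widetilde\Xi$ has the two–point function of a Poisson process. Summing the four contributions gives $K_f(\Theta)=\int f_{++}+\int f_{+-}+\int f_{-+}+\int f_{--}=\int_{\RR^2\times\RR^2}f$.

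The main obstacle is not any single estimate but the bookkeeping: matching each of the four half–plane pieces to the correct functional with the correct orientation, so that no spurious factor of two or omitted ordering creeps in, and supplying the $R^{--}$ identity, which Theorem \ref{thm3} does not state explicitly but which follows from the identical Siegel–unfolding argument. The initial step is carried out for $f\in\C_0(\RR^2\times\RR^2)$, where the restricted integrands $f_{s_1 s_2}$ are merely in $\L^1$ and Theorem \ref{thm3} applies; the passage to general $f\in\L^1$, together with the negligibility of the axis, are then the routine density and monotone–convergence arguments already used in the proofs of Theorems \ref{thm3} and \ref{cor-5}.
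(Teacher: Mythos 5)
Your proposal is correct and follows essentially the same route as the paper: the identical four-fold decomposition $f_{++},f_{+-},f_{-+},f_{--}$ by half planes, with the $(+,+)$ piece matched to $R^+$, the two mixed pieces matched to $R^-$ (with arguments swapped for $f_{-+}$), and Theorem \ref{thm3} applied term-wise. The only deviation is the $(-,-)$ piece, which the paper dispatches as $R^+_{f_{--}}\big((1,(\tfrac12,-\tfrac14))g\big)$ using the invariance of $\mu$ under this left translation (together with reflection invariance of Lebesgue measure), whereas you reprove the needed identity for your functional $R^{--}$ by redoing the Siegel unfolding -- an equivalent computation, so this is a presentational variant rather than a different approach; your explicit treatment of points on the vertical axis is a detail the paper leaves implicit.
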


\begin{proof}
Let 
\begin{align}
f_{++}(\vecx_1,\vecx_2) & =f(\vecx_1,\vecx_2) \chi_{\HH_+}(\vecx_1)  \chi_{\HH_+}(\vecx_2), &
f_{+-}(\vecx_1,\vecx_2) & =f(\vecx_1,\vecx_2) \chi_{\HH_+}(\vecx_1)  \chi_{\HH_-}(\vecx_2), \\
f_{-+}(\vecx_2,\vecx_1)&=f(\vecx_1,\vecx_2) \chi_{\HH_-}(\vecx_1)  \chi_{\HH_+}(\vecx_2), &
f_{--}(\vecx_1,\vecx_2)&=f(\vecx_1,\vecx_2) \chi_{\HH_-}(\vecx_1)  \chi_{\HH_-}(\vecx_2),
\end{align}
be the restrictions to the various half planes. We have
\begin{equation}
K_f(\Theta) = \int_Y \big[R_{f_{++}}^+(g) + R_{f_{--}}^+((1,(\tfrac12,-\tfrac14))g) + 
R_{f_{+-}}^-(g) + R_{f_{-+}}^-(g) \big] d\mu(g).
\end{equation}
The corollary now follows from Theorem \ref{thm3} by integrating term-wise.
\end{proof}

Given a subgroup $H\subset G$, we say $\Theta$ is $H$-invariant if $\Theta h$ has the same distribution as $\Theta$ (viewed as random point sets in $\RR^2$) for all $h\in H$.

Consider the subgroup
\begin{equation}
P= \bigg\{ \bigg(\begin{pmatrix} a & b \\ 0 & 1/a \end{pmatrix}, (0,y) \bigg) \,\bigg|\, b,y\in\RR, \; a\in\RR\setminus\{0\} \bigg\} \subset G.
\end{equation}

\begin{thm}\label{thm4}
$\Theta$ is $P$-invariant but not $\SLR$-invariant.
\end{thm}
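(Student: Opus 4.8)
The plan is to treat the two claims separately, writing $\Theta=\Theta(g)$ for the dependence on $g\in Y$. It is convenient to note that, with $w=(-I,\vecnull)\in G$ (acting on $\RR^2$ by $\vecx\mapsto-\vecx$), one has $-[\ZZ^2+\vecv]g=[\ZZ^2+\vecv]gw$, so that
\[
\Theta(g)=\big(\ZZ^2 g\cap\HH_+\big)\cup\big([\ZZ^2+\vecv]gw\cap\HH_-\big),
\]
and since $w$ normalises $\Lambda$ (indeed $w(\gamma_0,\veck)w=(\gamma_0,-\veck)$), multiplication by $w$ on either side preserves $\mu$. For the generators of $P$ of the form $h=(M,\vecnull)$ with $M$ upper triangular of positive diagonal entry $a>0$ (the diagonal and upper-unipotent directions), the action preserves each half plane and $h$ commutes with $w$; hence $\Theta(g)h=(\ZZ^2 gh\cap\HH_+)\cup([\ZZ^2+\vecv]ghw\cap\HH_-)=\Theta(gh)$, and because $gh$ is again $\mu$-distributed this settles these directions at the level of point sets.

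The two remaining families do not commute with $w$ and are more delicate. A diagonal $h=\diag(a,1/a)$ with $a<0$ factors as $h=\diag(|a|,1/|a|)\,w$, so by the previous case $\Theta(g)h=\Theta(g_1)w=-\Theta(g_1)$ with $g_1\sim\mu$; negation swaps the half planes and interchanges the two lattices, turning the right-hand lattice into $[\ZZ^2+\vecv]g_1$ and introducing the coset $2\vecv\equiv(0,\tfrac12)\bmod\ZZ^2$. I would absorb this using the normaliser of $\Lambda$ in $G$ — one checks that the half-period translation $(I,(0,\tfrac12))$ normalises $\Lambda$, and that some element of $\SLZ$ sends $\vecv\mapsto-\vecv$ modulo $\ZZ^2$ — to build a $\mu$-preserving $g\mapsto g''$ realising $\Theta(g)h=\Theta(g'')$. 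The vertical translations $h=(I,(0,y))$ are the crux: tracking centres, $\Theta(g)+(0,y)$ is the gluing of $\ZZ^2 g'$ with the left lattice $-[\ZZ^2+\vecv]g'$ shifted by $+(0,2y)$, where $g'=g(I,(0,y))\sim\mu$, and this shift cannot be undone pointwise: requiring $\Theta(g'')=\Theta(g)+(0,y)$ forces $\vecv(\gamma_0-I)\equiv-2(0,y)M_g^{-1}\bmod\ZZ^2$ for some $\gamma_0\in\SLZ$, whose left-hand side lies in the discrete set $\tfrac14\ZZ^2/\ZZ^2$ while the right-hand side varies continuously. I would therefore prove invariance here at the level of the finite-dimensional distributions \eqref{tcon}, equivalently of all mixed correlation functions, computing them by the unfolding in the proof of Theorem \ref{thm3} followed by a Siegel–Rogers average over $Y_0=\Gamma_{2,0}(4)\backslash\SLR$. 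The expectation is that $\Gamma_{2,0}(4)$ and $\vecv=(\tfrac12,-\tfrac14)$ are engineered precisely so that the lower-order (proportional-vector) terms, the only ones that could see the shift by $2y$, are themselves invariant under it; verifying this cancellation is the main obstacle.

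For the failure of $\SLR$-invariance it suffices to produce a single $h\in\SLR$ moving the law. I would take the lower-triangular shear $h=(\begin{smallmatrix}1&0\\c&1\end{smallmatrix},\vecnull)$ with $c\neq0$, together with the observable $E_B$ that the point set has three points in $B$ spanning a triangle of non-integer doubled area. Because $\Theta\cap\mathring\HH_+=\ZZ^2 g\cap\mathring\HH_+$ lies in the unimodular lattice $\ZZ^2 M_g$, every triangle on its points has integer doubled area, so $\PP(\Theta\in E_B)=0$ for every $B\subset\mathring\HH_+$. Now $\det h=1$ preserves areas but $h$ tilts the dividing line $\{x_1=0\}$, so for a tall $B\subset\mathring\HH_+$ and $|c|$ large the preimage $Bh^{-1}$ straddles the seam and, with positive probability, meets both lattices; a triangle with vertices $\vecm_1 g,\vecm_2 g\in\ZZ^2 g$ and $-(\vecn+\vecv)g$ of the left lattice has doubled area
\[
-\det(\veca,\vecc)-\det(\veca,\vecv)-2\det(\veca M_g,\vecxi_g),\qquad \veca=\vecm_2-\vecm_1,\ \vecc=\vecn+\vecm_1\in\ZZ^2,
\]
whose last summand is continuously distributed (as $\vecxi_g$ is uniform and $\veca\neq\vecnull$), so the doubled area is almost surely non-integer. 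Transporting back by the area-preserving $h$, the points of $\Theta h$ in $B$ then contain such a triangle with positive probability, whence $\PP(\Theta h\in E_B)>0=\PP(\Theta\in E_B)$ and $\Theta h$ does not have the law of $\Theta$.
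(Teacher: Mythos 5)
Three of your four pieces are sound, and two of them track the paper closely. The elements $(M,\vecnull)$ with $M$ upper triangular and positive diagonal entry are handled exactly as in the paper (right translation $g\mapsto gh$ plus right invariance of $\mu$); this is the part of the paper's ``evident'' step that is actually fine. Your sketch for the $a<0$/reflection case is the right idea but stops short: the clean realisation, which is what the paper does, is left multiplication by the single element $(-I,\vecv)$ with $\vecv=(\tfrac12,-\tfrac14)$, which normalises $\Lambda$ by \eqref{well}, hence acts on $Y$ preserving $\mu$, and swaps $\ZZ^2g$ with $[\ZZ^2+\vecv]g$ exactly, leaving no coset $2\vecv$ to absorb; this gives $-\Theta\stackrel{d}{=}\Theta$ in one line. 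Your non-$\SLR$-invariance argument is correct and is in essence a rigorous version of the paper's: the paper rotates by $90^{\circ}$ and asserts one then sees two distinct affine lattices in a half plane where $\Theta$ shows only one; your shear plus the non-integer doubled-area observable is an explicit certificate of that dichotomy, and your area formula is right.

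The genuine gap is the step you flagged yourself: the vertical translations $h=(I,(0,y))$. You correctly compute that $\Theta(g)h$ glues $\ZZ^2g'$ to $-[\ZZ^2+\vecv]g'+(0,2y)$ with $g'=gh\sim\mu$, correctly observe that this is not of the form $\Theta(g'')$ (your discrete-versus-continuous argument), and then defer to a hoped-for cancellation at the level of correlation functions. That cancellation does not exist, because the law is genuinely not invariant; your obstruction upgrades directly to a refutation. For any $g=(M,\vecxi)$, any $\vecp\in\Theta(g)\cap\HH_+$ and $\vecq\in\Theta(g)\cap\HH_-$ have the form $\vecp=\vecm M+\vecxi$, $\vecq=-(\vecn+\vecv)M-\vecxi$, so that
\begin{equation*}
\vecp+\vecq=(\vecm-\vecn-\vecv)M,\qquad\text{hence}\qquad 4(\vecp+\vecq)\in\ZZ^2M,
\end{equation*}
since $4\vecv=(2,-1)\in\ZZ^2$; moreover $\ZZ^2M$ is recoverable from the configuration as the group generated by the differences of its points in the open right half plane. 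So the event $E$ that $4(\vecp+\vecq)$ lies in that difference group for all such pairs satisfies $\PP(\Theta\in E)=1$. For $\Theta h$ the same computation gives $4(\vecp+\vecq)\in\ZZ^2M+(0,8y)$, and $(0,8y)\in\ZZ^2M$ is a $\mu$-null event for $y\neq0$; hence $\PP(\Theta h\in E)=0$, and $\Theta h$ does not have the law of $\Theta$. No moment computation can rescue this: Theorem \ref{cor5} only says the two-point function is translation invariant, and two-point functions do not determine the law (the discrepancy already shows up in mixed three-point correlations, whose singular parts move under the shift).

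The upshot is that what you called the ``main obstacle'' is not a gap in your proof but a counterexample to the statement: the translation part of the claimed $P$-invariance is false, and the paper's own justification of it --- the sentence declaring invariance under $(M,(0,y))$, $a>0$, evident ``since these transformations preserve the half planes'' --- is exactly where the error sits. Preserving the half planes is not sufficient, because pushing the translation through the negation on the left half plane flips $(0,y)$ into $(0,-y)$, producing the relative shift $(0,2y)$ you identified. What is true, and what both your argument and the paper's correctly establish, is invariance under $\bigl\{\bigl(\bigl(\begin{smallmatrix}a&b\\0&1/a\end{smallmatrix}\bigr),\vecnull\bigr):a\neq0,\;b\in\RR\bigr\}$ (including $-I$), together with non-invariance under $\SLR$. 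Had you trusted your own computation over the printed statement, you would have reached this conclusion; as written, the proposal leaves its central step resting on a cancellation that cannot occur.
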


\begin{proof}
The invariance under 
\begin{equation}
\bigg(\begin{pmatrix} a & b \\ 0 & 1/a \end{pmatrix}, (0,y) \bigg)
\end{equation}
is evident for all $a>0$, $b\in\RR$, since these transformations preserve the half planes $\HH_+$ and $\HH_-$.
What remains for the proof of $P$-invariance is the invariance under the element $(-1,(0,0))$, which acts on $\RR^2$ by reflection at the origin. By the $G$-invariance of $\mu$, $\ZZ^2 g$ has the same distribution as $\ZZ^2(-1,(\tfrac12,-\tfrac14))g$. This implies that $\Theta = \big(\ZZ^2 g\cap\HH_+\big) \cup \big(-\big(\big[\ZZ^2+(\tfrac12,-\tfrac14)\big] g\big)\cap\HH_-\big)$ has the same distribution as the random point set 
\begin{equation}
\begin{split}
\Theta' 
& = \big(\ZZ^2 (-1,(\tfrac12,-\tfrac14)) g\cap\HH_+\big) \cup \big(-\big(\big[\ZZ^2+(\tfrac12,-\tfrac14)\big] (-1,(\tfrac12,-\tfrac14)) g\big)\cap\HH_-\big) \\
& = \big(\big[\ZZ^2+(\tfrac12,-\tfrac14)\big]g\cap\HH_+\big) \cup \big(-\big(\ZZ^2 g\big)\cap\HH_-\big).
\end{split}
\end{equation}
This shows $\Theta=-\Theta'$, which establishes the $(-1,(0,0))$-invariance.

As to the $\SLR$-invariance, we note that every realisation of $\Theta$, restricted to the right half planes $\HH_+$, looks like a affine lattice $\ZZ^2$ restricted to the right half plane. It is evident that the rotated process 
$$
\widetilde\Theta= \Theta \begin{pmatrix} 0 & -1 \\ 1 & 0 \end{pmatrix} 
$$
does not have this property, since every realisation produces two different affine lattices (which are copies of the same underlying lattice) in the upper and lower half plane, respectively. This shows that $\widetilde\Theta$ and $\Theta$ do not have the same distribution. Hence $\Theta$ is not $\SLR$-invariant.
\end{proof}

\section{Proof of Theorem 1} 


To establish the convergence $\Theta_T\Rightarrow \Theta$, it is sufficient to prove that convergence holds in finite-dimensional distribution \cite[Theorem 16.16]{Kallenberg02} (recall the remark following Theorem \ref{cor-5}). 
Since $\Theta$ is a simple point process, it is in fact sufficient to consider the one-dimensional distributions. 
That is, we need to prove that {\em for any bounded Borel set $\scrA\subset\RR^2$ with boundary of Lebesgue measure zero, the random variable $\#(\Theta_T\cap \scrA)$ converges in distribution to $\#(\Theta\cap \scrA)$.} It is in fact sufficient to prove the convergence for test sets $\scrA$ that are rectangles of the form $[a,b]$. More general Borel sets $\scrA$ (bounded with boundary of Lebesgue measure zero) can then be approximated by unions of such rectangles. This will require proof of convergence for the joint distribution for finitely many rectangles. We will limit the presentation to one rectangle; the case of multiple rectangles is analogous.

{\em The right half plane.} The following estimates are almost identical to those in \cite[Section 3]{Elkies04}, which considers the case of triangular test sets of the form $\{(x,y) \mid 0<x<1,\; |y|\leq \sigma x \}$, rather than general rectangles. Consider a rectangle $[a,b]\times[c,d]$ and assume for now $a\geq 0$. 
Note first of all that in \eqref{ThetaT} the sine has to be of order $O((T n)^{-1/2})$, and thus its argument must be close, by the same order, to $0$ or $\pi$ mod $2\pi$. If it is close to $\pi$, the cosine is negative which is ruled out by the assumption $a\geq 0$. 
Set $\xi=-\theta/2\pi$, and define $m\in\ZZ$ so that $-\frac12\leq\sqrt{n}+\xi+m<\frac12$. Using $4|x| \leq | \sin(2\pi x) |$ for $|x|\leq \frac14$, we have for $Tn$ sufficiently large,
\begin{equation}
4 |\sqrt{n}+\xi+ m| \leq | \sin(2\pi(\sqrt{n}+\xi)) | \leq \max\{|c|,|d|\} \sqrt{\frac{\pi}{T n}}  
\end{equation}
and thus
\begin{equation}\label{ads}
\sqrt n = -(m+\xi) + O(1/\sqrt{Tn}) .
\end{equation}
The objective is now to linearise the inequalities
\begin{equation}
a\leq \sqrt\frac{n}{\pi T} \cos(2\pi(\sqrt{n}+\xi))\leq b, \qquad 
c\leq \sqrt{\frac{T n}{\pi}} \sin(2\pi(\sqrt{n}+\xi)) \leq d.
\end{equation}
Taylor's theorem tells us that $|\cos(x)-1|\leq \tfrac12 x^2$ and $|\sin(x)-x|\leq \tfrac16 |x|^3$, and so 
\begin{equation}\label{there}
a  \leq \sqrt\frac{n}{\pi T} +  O\bigg(\frac{1}{T^{3/2} n^{1/2}}\bigg) \leq b ,
\end{equation}
\begin{equation}\label{here}
\frac{c}{2\sqrt{\pi T n}}  \leq \sqrt{n}+\xi+m + O\bigg(\frac{1}{(T n)^{3/2}}\bigg) \leq \frac{d}{2\sqrt{\pi T n}} .
\end{equation}
The last inequality transforms to
\begin{equation}
\frac{c}{2\sqrt{\pi T n}} -(m+\xi)\leq \sqrt{n} + O\bigg(\frac{1}{(T n)^{3/2}}\bigg) \leq \frac{d}{2\sqrt{\pi T n}}-(m+\xi),
\end{equation}
which is equivalent to 
\begin{equation}\label{eq:ten}
\bigg[\frac{c}{2\sqrt{\pi T n}} -(m+\xi)\bigg]^2 \leq n + O\bigg(\frac{1}{T^{3/2} n}\bigg) \leq \bigg[\frac{d}{2\sqrt{\pi T n}}-(m+\xi)\bigg]^2.
\end{equation}
Now \eqref{eq:ten} is equivalent to
\begin{equation}\label{here0}
-\frac{c(m+\xi)}{\sqrt{\pi T n}} +(m+\xi)^2 \leq n + O\bigg(\frac{1}{Tn}\bigg) \leq -\frac{d(m+\xi)}{\sqrt{\pi T n}} +(m+\xi)^2  .
\end{equation}
In view of \eqref{ads}, this is equivalent to
\begin{equation}\label{eq:twe}
\frac{c}{\sqrt{\pi T}} +(m+\xi)^2  \leq n + O\bigg(\frac{1}{Tn}\bigg)  \leq \frac{d}{\sqrt{\pi T}} +(m+\xi)^2  ,
\end{equation}
and furthermore \eqref{there} is equivalent to
\begin{equation}\label{there2}
 a   \leq -\frac{m+\xi}{\sqrt{\pi T}}   +  O\bigg(\frac{1}{T n^{1/2}}\bigg) \leq b  .
\end{equation}
Let us assume that $\xi\notin [-\delta,\delta]+\ZZ$, where $\delta>0$ may be chosen arbitrarily small. (This assumption is without loss of generality, because the event $\xi\in [-\delta,\delta]+\ZZ$ has probability at most $\lambda([-\delta,\delta])$, which tends to zero as $\delta\to 0$.) We can then drop the condition $n\geq 1$, as the positivity of $n$ is implied in \eqref{eq:twe} for $T$ sufficiently large.
Next, replacing $n$ by $n+m^2$ yields
\begin{equation}\label{rightineq}
a  \leq -\frac{m+\xi}{\sqrt{\pi T}}  + O\bigg(\frac{1}{T}\bigg) \leq b,\qquad \frac{c}{\sqrt{\pi T}}  \leq n -2m\xi - \xi^2 + O\bigg(\frac{1}{T}\bigg) \leq \frac{d}{\sqrt{\pi T}}  .
\end{equation}
So, given any $\epsilon>0$ there exists $T>0$ such that the number of points $(m,n)\in\ZZ^2$ satisfying \eqref{rightineq} is bounded above (resp.\ below) by the number of points with
\begin{equation} \label{OT0}
\bigg(-\frac{m+\xi}{\sqrt{\pi T}},\sqrt{\pi T}(n -2m\xi - \xi^2)\bigg) \in \text{$\scrA_\epsilon^+$ (resp.\  $\scrA_\epsilon^-$)} 
\end{equation}
with 
\begin{equation}
\scrA_\epsilon^+=[a-\epsilon,b+\epsilon]\times[c-\epsilon,d+\epsilon], \qquad  
\scrA_\epsilon^-=[a+\epsilon,b-\epsilon]\times[c+\epsilon,d-\epsilon]. 
\end{equation}
Note that \eqref{OT0} can be expressed as
\begin{equation} \label{OT}
(-m,n) N(\xi) D(\pi T) \in \text{$\scrA_\epsilon^+$ (resp.\  $\scrA_\epsilon^-$)}  .
\end{equation}
We have thus established that in rectangles with $a\geq 0$ the process $\Theta_T$ looks asymptotically like the random affine lattice
\begin{equation}\label{this}
\Xi_T=\ZZ^2N(\xi) D(\pi T) ,
\end{equation}
see Figure \ref{fig3}. We consider $\Xi_T$ as a process in $\RR^2$, so the fact that $\scrA_\epsilon^+$ might intersect with the left half plane (e.g.\ in the case $a=0$) is not a cause for concern.
Elkies and McMullen's work implies that for $\xi$ random with respect to any absolutely continuous probability measure on $\RR/\ZZ$, the process $\Xi_T$ converges in distribution to $\Xi$, as $T\to\infty$, where $\Xi$ is the point process defined at the start of Section \eqref{RPS}. In view of \eqref{intm}, the probability of having one or more points in $\scrA_\epsilon^+\setminus\scrA_\epsilon^-$ is of order $\epsilon$. The limits $T\to\infty$ and $\epsilon\to 0$ therefore commute. This would complete the proof if we were only interested in test sets in the right half plane. 

{\em The left half plane.} Let us therefore turn to the case $b\leq 0$. In this case the argument of the sine has to be close to $\pi$ as the cosine is now required to be negative. Then \eqref{here} is replaced by
\begin{equation}\label{here2}
-\frac{d}{2\sqrt{\pi T n}}  \leq \sqrt{n}+\zeta+m + O\bigg(\frac{1}{(T n)^{3/2}}\bigg) \leq -\frac{c}{2\sqrt{\pi T n}} .
\end{equation}
with $\zeta=-(\theta+\pi)/2\pi=\xi-1/2$. Repeating the steps in the previous calculation leads to
\begin{equation}\label{rightineq2}
a  \leq \frac{m+\zeta}{\sqrt{\pi T}}  + O\bigg(\frac{1}{T}\bigg) \leq b,\qquad \frac{c}{\sqrt{\pi T}}  \leq -n +2m\zeta + \zeta^2 + O\bigg(\frac{1}{T}\bigg) \leq \frac{d}{\sqrt{\pi T}}  .
\end{equation}
\begin{equation}
-\frac{d}{\sqrt{\pi T}}  \leq n -2m\zeta - \zeta^2 + O\bigg(\frac{1}{T}\bigg) \leq -\frac{c}{\sqrt{\pi T}} ,\qquad
a\sqrt{\pi T}  \leq m+\zeta + O\bigg(\frac{1}{T}\bigg) \leq b\sqrt{\pi T}.
\end{equation}
So upper (resp.\ lower) bounds on the number of points are given by
\begin{equation}
\bigg(\frac{m+\zeta}{\sqrt{\pi T}},\sqrt{\pi T}(-n +2m\zeta + \zeta^2)\bigg) \in \text{$\scrA_\epsilon^+$ (resp.\  $\scrA_\epsilon^-$)} ,
\end{equation}
and hence
\begin{equation}
(m,-n) \bigg(\begin{pmatrix} 1 & 2\zeta \\ 0 & 1 \end{pmatrix},(\zeta,\zeta^2)\bigg) D(\pi T) \in \text{$\scrA_\epsilon^+$ (resp.\  $\scrA_\epsilon^-$)}  .
\end{equation}
We have thus shown that in rectangles with $b\leq 0$ the process $\Theta_T$ looks like the affine lattice
\begin{equation}\label{this1}
\ZZ^2\bigg(\begin{pmatrix} 1 & 2\zeta \\ 0 & 1 \end{pmatrix},(\zeta,\zeta^2)\bigg) D(\pi T)
= -\big[ \ZZ^2 N(\zeta) D(\pi T) \big].
\end{equation}
Finally we check that
\begin{equation}
\ZZ^2 N(\zeta)
=  \ZZ^2 \bigg(\begin{pmatrix} 1 & -1 \\ 0 & 1 \end{pmatrix},(\tfrac12,-\tfrac14)\bigg)  N(\xi) \\
=  \ZZ^2 \big(1,(\tfrac12,-\tfrac14)\big)  N(\xi) .
\end{equation}
We have now established that in rectangles with $b\leq 0$ the process $\Theta_T$ looks asymptotically like the random affine lattice
\begin{equation}\label{this2}
\widetilde\Xi_T=-[\ZZ^2 \big(1,(\tfrac12,-\tfrac14)\big) N(\xi) D(\pi T)],
\end{equation}
see Figure \ref{fig5}. As in the case of the right half plane, the process $\widetilde\Xi_T$ converges in distribution to $\tilde\Xi$ (which has the identical distribution as the process $\Xi$), as $T\to\infty$, provided we restrict test sets to rectangles in the left half plane. The final challenge is now to combine these results to establish joint convergence in both half planes.

{\em Joint convergence.} Let us now consider the remaining case $a\leq 0 \leq b$. We decompose the rectangle as
\begin{equation}
[a,b]\times[c,d]= ([a,0]\times[c,d]) \cup ([0,b]\times[c,d]).
\end{equation}
The argument for the left and right halfplanes show that an upper bound for the number of points is obtained by considering, for any $\epsilon>0$, the joint distribution of
\begin{equation}\label{jtXi}
\widetilde\Xi_T \cap ([a-\epsilon,\epsilon]\times[c-\epsilon,d+\epsilon]) , \qquad \Xi_T \cap ([-\epsilon,b+\epsilon]\times[c-\epsilon,d+\epsilon]),
\end{equation}
where $\widetilde\Xi_T=\ZZ^2 \big(1,(\tfrac12,-\tfrac14)\big) N(\xi) D(\pi T)$ and $\Xi_T=\ZZ^2 N(\xi) D(\pi T)$ are not independent, since the are functions of the same random varaible $\xi$. Theorem \ref{thm2} implies that the joint limit distribution of \eqref{jtXi} is given by
\begin{equation}\label{jtXi2}
\widetilde\Xi \cap ([a-\epsilon,\epsilon]\times[c-\epsilon,d+\epsilon]) , \qquad \Xi \cap ([-\epsilon,b+\epsilon]\times[c-\epsilon,d+\epsilon]),
\end{equation}
where $\widetilde\Xi=[\ZZ^2+(\tfrac12,-\tfrac14)] g$, $\Xi = \ZZ^2 g$, and $g\in Y$ randomly distributed according to $\mu$. Similarly, a lower bound on the number of points is given by
\begin{equation}\label{jtXi3}
\widetilde\Xi \cap ([a+\epsilon,-\epsilon]\times[c+\epsilon,d-\epsilon]) , \qquad \Xi \cap ([\epsilon,b-\epsilon]\times[c+\epsilon,d-\epsilon]).
\end{equation}
The remark following Theorem \ref{cor-5} provides the regularity that allows us to exchange the limits $T\to\infty$ and $\epsilon\to 0$. This shows that indeed 
$\Theta_T \cap ([a,b]\times[c,d])$ converges in distribution to 
\begin{equation}
[\widetilde\Xi \cap ([a,0]\times[c,d])] \cup [\Xi \cap ([0,b]\times[c,d])] = \Theta \cap ([a,b]\times[c,d]) ,
\end{equation}
as required.
\qed

\end{document}